\documentclass[11pt,reqno]{amsart}

\usepackage{amsmath}
\usepackage{amssymb}
\usepackage{amsthm}
\usepackage{color}
\usepackage{eucal}

\DeclareSymbolFont{rsfscript}{OMS}{rsfs}{m}{n}
\DeclareSymbolFontAlphabet{\mathrsfs}{rsfscript}

\def\softd{{\leavevmode\setbox1=\hbox{d}\hbox
            to 1.15\wd1{d\kern-0.2ex{\char039}\hss}}}    
\def\softl{l\kern-0.3ex\raise0.1ex\hbox{'}\kern-0.3ex}   
\newtheorem{Thm}{Theorem}
\newtheorem{Prop}[Thm]{Proposition}
\newtheorem{Lemma}[Thm]{Lemma}
\newtheorem{Cor}[Thm]{Corollary}

\theoremstyle{remark}

\def\om{\omega}

\def\circledm{\protect\mathbin{\hbox
    {\protect$\bigcirc$\rlap{\kern-8.2pt\raise0pt\hbox
    {\protect$\mathtt{m}$}}}}}        
\def\malc{\circledm}            
\def\smallcircledm{\protect\mathbin{\hbox
    {\protect$\bigcirc$\rlap{\kern-6.9pt\raise0pt\hbox
    {\protect$\mathtt{m}$}}}}}
\def\malcab{\smallcircledm}

\title[]{On the power pseudovariety $\mathbf{PCS}$}

\author{K.~Auinger}
\address{Fakult\"at f\"ur Mathematik, Universit\"at Wien,
Nordbergstrasse 15, A-1090 Wien, Austria}
\email{karl.auinger@univie.ac.at}


\begin{document}

\begin{abstract}
The pseudovariety $\mathbf{PCS}$ {which is} generated by all
power semigroups of {finite} completely simple semigroups
is characterized in various ways. For example, the equalities
     \[\mathbf{PCS}=\mathbf{J}\malcab\mathbf{CS}
     =\mathbf{BG}\malcab\mathbf{RB}\]
are established. {This resolves a problem raised by Ka\softd ourek and leads to several transparent algorithms for deciding
membership in  $\mathbf{PCS}$}.
\end{abstract}
\keywords{Power semigroup, power pseudovariety, power operator, completely simple semigroup, membership problem}
\subjclass[2010]{20M07, 20M20, 20M30}
\maketitle    

In the study of pseudovarieties of finite semigroups,
the \emph{power operator} $\mathbf P$
that assigns to {every} pseudovariety  $\mathbf{V}$ the
pseudovariety $\mathbf{PV}$ generated by all power semigroups
of {the} members of $\mathbf{V}$ has attracted considerable
attention{ --- see Chapter~11 of the monograph}~\cite
{almeidauniv} and the survey article \cite{almeidapower} by Almeida.
For example, the recognition of the  pseudovariety $\mathbf{PG}$
generated by all power groups (that is, power semigroups of groups)
 as the pseudovariety {$\mathbf{BG}$}
of all {block groups}, often stated as the famous equality
\begin{equation}\label{pg=bg}\mathbf{PG}=\mathbf{BG},\tag{$\sharp$}\end{equation}
is a celebrated result in finite semigroup theory.

The proof of the equality (\ref{pg=bg}) was originally obtained by
Henckell and Rhodes~\cite{henckrhodes} as a~consequence of a~deep
result of Ash~\cite{ashinevit}.  The reader interested in the
historical background, {in further developments} and
in generalizations of the above-mentioned equality is referred to
{Section 4.17 of the monograph}~\cite{qtheory}  {by Rhodes and Steinberg} and {to}
the introduction of Ka\softd ourek's paper~\cite{kadpcs}. {The equality (\ref{pg=bg}) in particular implies the decidability of membership in  $\mathbf{PG}$ which does not follow from the definition of the power operator. Indeed, the operator $\mathbf{P}$ does not preserve decidability of membership \cite{auinsteinbpartial}.}

It seems to be natural to replace ``groups'' with ``completely
simple semigroups'' and to ask for a suitable description
of the power pseudovariety $\mathbf{PCS}$. This question occurs in the list of open problems in~\cite{almeidauniv}.     
The first solution to this was found by Steinberg
\cite{steinbpcs} who proved the equality
     \[\mathbf{PCS}=\mathbf{BG}*\mathbf{RZ}\]     
which  implied the decidability of membership
in $\mathbf{PCS}$. However, neither a transparent structural description of the members of $\mathbf{PCS}$ nor an efficient algorithm for the membership problem in $\mathbf{PCS}$ were presented. The latter fact was considered by Ka\softd ourek  to be not completely satisfactory; he therefore proposed further research on that topic: in \cite{kadpcs} he
studied  finite semigroups~$S$ all of whose subsemigroups of the form $aSb$ (for $a,b\in S$) are block
groups.  It is easy to see that these  exactly comprise
the Ma\softl cev product $\mathbf{BG}\malc \mathbf{RB}$, see Corollary \ref{agbg} below.  Ka\softd ourek
proved that this class can be represented also as the Ma\softl cev product $\mathbf{J}\malc\mathbf{CS}$. Moreover, he related   $\mathbf{PCS}$ with the latter pseudovariety
by proving the inclusion $\mathbf{PCS}\subseteq\mathbf{J}\malc\mathbf{CS}$. The question whether that inclusion is
proper or not has been left open. It is the intention of the present
paper to resolve this problem. In fact, we shall prove
more, namely {we shall} establish the equalities
     \[\mathbf{PCS}=\mathbf{J}\malc\mathbf{CS}=
     \mathbf{BG}\malc\mathbf{RB}=\mathbf{ER}\malc\mathbf{RZ}\cap
     \mathbf{EL}\malc\mathbf{LZ}=\mathbf{BG}*\mathbf{RZ}.\]
The paper is organized as follows. In Section~\ref{preliminaries} we shall mention all preliminaries needed while
in Section~\ref{mainresult} we shall formulate and prove the
main result. This will be done semantically as well as syntactically;
the members of  $\mathbf{PCS}$
will thus be characterized in various ways.

\section{Preliminaries}
\label{preliminaries}

\subsection{Definitions, notation.} The reader is assumed to be familiar with central facts of semigroup theory, in particular with the theory of pseudovarieties of finite semigroups, including pseudoidentities and Tilson's derived semigroupoid theorem;  sources are  the monographs \cite{almeidauniv,qtheory} as well as Tilson's seminal paper \cite{tilson}. All semigroups considered in this paper are finite.

We start by introducing some notation. Given a~semigroup $S$ and {an
element} ${a}\in S$, then $L({a})=S{a}\cup\{{a}\}$
is the principal left ideal of~$S$ generated  by~$a$,
and likewise, $R({a})={a}S\cup\{{a}\}$ is the principal
right ideal of~$S$ generated  by $a$. Further, we
denote by  $L(a)^\rho$ the semigroup of all inner right translations (acting on the right) $\rho_s:L(a)\to L(a)$, $x\mapsto xs$ for $s\in L(a)$; then $\rho^{L(a)}:L(a)\twoheadrightarrow L(a)^\rho$, $s\mapsto \rho_s$ is a (not necessarily injective) homomorphism, called the \emph{right regular representation} of $L(a)$. Dually, ${}^\lambda\!R(a)$ denotes the semigroup of all inner left translations (acting on the left) $\lambda_s:R(a)\to R(a)$, $x\mapsto sx$ for $s\in R(a)$; then $\lambda^{R(a)}:R(a)\twoheadrightarrow {}^\lambda\!R(a)$, $s\mapsto \lambda_s$ is a (not necessarily injective) homomorphism, called the \emph{left regular representation} of $R(a)$.

Let $S$ and $T$ be {two} semigroups, both
generated by {the same} set~$A$.  Then the
subsemigroup of {the direct product} $S\times T$ generated by the set $\{(a,a)\mid a\in A\}$, \textbf{considered as a relation} from $S$ to $T$, is the \emph{canonical
relational morphism  $S\to T$ with respect to~$A$}.

Pseudovarieties of semigroups are usually denoted by bold-face letters $\mathbf{V}, \mathbf{W}$, etc.
Let $\mathbf{RZ}$, $\mathbf{LZ}$, $\mathbf{RB}$, 
{$\mathbf{G}$,} $\mathbf{CS}$, and $\mathbf{J}$ be, respectively,
the pseudovarieties of all  right zero semigroups, left zero
semigroups, rectangular bands,
{groups,} completely simple
semigroups, and $\mathrsfs{J}$-trivial semigroups.  Of central importance
will be the pseudovariety {$\mathbf{BG}$} of all
\emph{block groups}. It  consists of all semigroups all of
whose regular elements have a {unique} inverse.  {Equivalently, $\mathbf{BG}$ is comprised of}
all semigroups which do not contain
non-trivial right zero  and  left zero subsemigroups. It is well known  {that} $\mathbf{BG}$ is defined by the single pseudoidentity
$(x^\om y^\om)^\om=(y^\om x^\om)^\om$. In addition, we shall need the pseudovariety $\mathbf{ER}$
 {which consists} of all semigroups whose idempotent generated subsemigroups  are $\mathrsfs{R}$-trivial.
A semigroup  then belongs to $\mathbf{ER}$ if and only if
it does not contain a non-trivial right zero
subsemigroup. It is well known that
$\mathbf{ER}$ is defined by the {single} pseudoidentity
$(x^\om y^\om)^\om =(x^\om y^\om)^\om x^\om$. At last, we shall
need yet the pseudovariety $\mathbf{EL}$ {which} is defined
dually, that is, it consists of all
semigroups containing no non-trivial left zero subsemigroup. Likewise,  $\mathbf{EL}$ is defined by the single
pseudoidentity $(y^\om x^\om)^\om=x^\om(y^\om x^\om)^\om$.
 {For the latter three pseudovarieties  the equality $\mathbf{BG}=\mathbf{ER}\cap\mathbf{EL}$ holds.}
All pseudovarieties considered in this paper are pseudovarieties of \textbf{semigroups}.

For a semigroup $S$, let $\mathfrak{P}(S)$ be the set of all non-empty subsets of $S$; endowed with set-wise multiplication, the set $\mathfrak{P}(S)$ itself becomes a semigroup. For a homomorphism $\alpha:S\to T$ between semigroups $S$ and $T$, the induced mapping $\mathfrak{P}(S)\to \mathfrak{P}(T)$, $X\mapsto \{x\alpha\mid x\in X\}$ is a homomorphism, the \emph{induced homomorphism}. For a pseudovariety  $\mathbf{V}$ denote by $\mathbf{P}'\mathbf{V}$ the pseudovariety generated by all power semigroups $\mathfrak{P}(S)$ with $S\in \mathbf{V}$. Likewise, the full powerset $\mathfrak{P}(S)\cup\{\emptyset\}$ of a semigroup $S$ is also a semigroup under set-wise multiplication. For a pseudovariety $\mathbf{V}$ let $\mathbf{PV}$ be the pseudovariety generated by all full power semigroups $\mathfrak{P}(S)\cup\{\emptyset\}$ with $S\in \mathbf{V}$.
 It is well known  that $\mathbf{P'V}$ and $\mathbf{PV}$ coincide if and only if $\mathbf{V}$  contains a nontrivial monoid, see Lemma 5.1  in \cite{almeidapower}. In all cases considered in this paper, the equality $\mathbf{P}'\mathbf{V}=\mathbf{PV}$ holds.

 Finally, a semigroup $S$ equipped with a partial order $\le$ is an \emph{ordered semigroup} if $a\le b$ implies $ac\le bc$ and $ca\le cb$ for all $a,b,c\in S$. It is well known and easy to see that an ordered monoid $M$ satisfying $a\le 1$ for all $a\in M$  is $\mathrsfs{J}$-trivial \cite{pinorder}.

\subsection{Pseudovarieties of the  forms $\mathbf{V}\malc\mathbf{RZ}$,
$\mathbf{V}\malc\mathbf{LZ}$,  $\mathbf{V}\malc\mathbf{RB}$, and $\mathbf{V}*\mathbf{RZ}$}\label{section12}
The results in this subsection are likely
to be known  {by} experts, but the author is not aware of a proper reference.
For arbitrary pseudovarieties $\mathbf{V}$ and $\mathbf{W}$, the \emph{{Ma\softl cev} product} $\mathbf{V}\malc
\mathbf{W}$ consists of all semigroups~$S$ for which there exists
a~semigroup $T\in\mathbf{W}$ and a~relational morphism $\phi:S\to T$
such that, for each idempotent $e\in T$, the inverse image of~$e$
under~$\phi$, {that is,} $e\phi^{-1}=\{s\in S\mid (s,e)\in\phi\}$
(which is a~subsemigroup of~$S$) belongs to $\mathbf{V}$. Suppose that~$\mathbf{W}$
is locally finite; then necessary
and sufficient for a~semigroup~$S$ to be contained in $\mathbf{V}\malc\mathbf
{W}$ is that the former condition holds for the canonical relational
morphism $\phi:S\to F_\mathbf{W}(A)$ (with respect to $A$) where $A$ is \textbf{some}
generating set of~$S$ and $F_\mathbf{W}(A)$ is the free
{semigroup} in~$\mathbf{W}$ generated by $A$. This allows to
give quite transparent descriptions of the members of the Ma\softl cev
products of the forms $\mathbf{V}\malc\mathbf{RZ}$, $\mathbf{V}
\malc\mathbf{LZ}$ and $\mathbf{V}\malc\mathbf{RB}$, for an arbitrary pseudovariety $\mathbf{V}$.

\begin{Lemma}
\label{malcrz}

A semigroup $S$ belongs to $\mathbf{V}
\malc\mathbf{RZ}$ if and only if each principal left ideal of~$S$
belongs to~$\mathbf{V}$; dually, $S$ belongs to $\mathbf{V}\malc
\mathbf{LZ}$ if and only if each principal right ideal of~$S$ belongs
to~$\mathbf{V}$.
\end{Lemma}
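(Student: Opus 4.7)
My plan is to apply the canonical-morphism criterion for Ma\softl cev products quoted in the preceding paragraph. The crucial observation is that the free semigroup $F_\mathbf{RZ}(A)$ on a set $A$ coincides with $A$ itself equipped with the multiplication $a\cdot b=b$, so every element is idempotent and any product $a_1\cdots a_n$ equals its last factor $a_n$. Consequently, for the canonical relational morphism $\phi:S\to F_\mathbf{RZ}(A)$ a pair $(s,a)$ lies in $\phi$ precisely when $s$ admits a factorization over $A$ ending in $a$; using that $A$ generates $S$, this gives $a\phi^{-1}=Sa\cup\{a\}=L(a)$.

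For sufficiency I would take $A=S$ itself as generating set. The hypothesis that every principal left ideal of $S$ belongs to $\mathbf{V}$ then says precisely that $a\phi^{-1}\in\mathbf{V}$ for every (idempotent) $a\in F_\mathbf{RZ}(S)$, and the canonical-morphism criterion immediately yields $S\in\mathbf{V}\malc\mathbf{RZ}$. For necessity I would argue directly from the definition: given any witnessing relational morphism $\psi:S\to T$ with $T\in\mathbf{RZ}$ and a fixed $t\in S$, pick any $t'\in t\psi$. For each $x\in S$ and any $x'\in x\psi$, the pair $(xt,x't')$ lies in $\psi$, and $x't'=t'$ holds in the right zero semigroup $T$, so that $xt\in(t')\psi^{-1}$. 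Combined with $t\in(t')\psi^{-1}$, this shows $L(t)\subseteq(t')\psi^{-1}$. Since $L(t)$ is easily checked to be a subsemigroup of $S$, and $(t')\psi^{-1}$ belongs to $\mathbf{V}$ because $t'$ is idempotent, closure of $\mathbf{V}$ under subsemigroups delivers $L(t)\in\mathbf{V}$.

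The dual assertion then follows by the obvious symmetric argument, working with principal right ideals and the canonical morphism into $F_\mathbf{LZ}(A)$. There is no real obstacle here; the whole lemma hinges on the straightforward identification of the preimages $a\phi^{-1}$ under the canonical morphism into $F_\mathbf{RZ}(A)$ (respectively $F_\mathbf{LZ}(A)$) with the principal left (respectively right) ideals, together with the fact that all elements of a right zero (respectively left zero) semigroup are idempotent.
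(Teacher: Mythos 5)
Your proof is correct and follows essentially the same route as the paper: both arguments rest on identifying the preimage $a\phi^{-1}$ under the canonical relational morphism $\phi:S\to F_{\mathbf{RZ}}(S)$ with the principal left ideal $L(a)$, and then invoking the canonical-morphism criterion for Ma\softl cev products over a locally finite pseudovariety. The only (harmless) difference is that the paper cites that criterion for both directions, whereas you re-derive the necessity direction by hand from an arbitrary witnessing relational morphism into a right zero semigroup, which merely makes the argument more self-contained.
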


\begin{proof}
Let $S$ be a semigroup and let  $RZ(S)$ be the (free) right zero
semigroup  on the set~$S$. {Thus, $RZ(S)$ is just the set $S$
endowed with right zero multiplication.} Let $\phi$ be the canonical
relational morphism $S\to RZ(S)$ (with respect to  $S$). Then, for each  $a\in RZ(S)$ ($=S$) we have
${a}\phi^{-1}=L({a})$ whence, according to the  {discussion}
before the statement of the Lemma, $S\in\mathbf{V}\malc\mathbf{RZ}$
if and only if $L({a})\in\mathbf{V}$ for each ${a}\in S$.
The dual case is proved analogously.
\end{proof}

\begin{Lemma}
\label{malcrb}
A semigroup $S$
 belongs to  $\mathbf{V}\malc\mathbf{RB}$ if
and only if, for all $a,b\in S$, the subsemigroups $\{ab\}\cup aSb$
 and  $\{a,a^2\}\cup aSa$ belong to~$\mathbf{V}$.
\end{Lemma}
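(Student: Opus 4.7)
The plan is to apply the locally-finite form of the criterion recalled in Subsection~\ref{section12} to the pseudovariety $\mathbf{W}=\mathbf{RB}$, with the generating set chosen to be $A=S$ itself.

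First I would make the free object explicit: the free rectangular band on $A$ is $F_\mathbf{RB}(A)=A\times A$ under the multiplication $(a,b)(c,d)=(a,d)$, with $A$ embedded via $a\mapsto(a,a)$. Two observations are worth recording: $F_\mathbf{RB}(A)$ consists entirely of idempotents (so the criterion will impose a condition for \emph{every} pair $(a,b)$), and $|F_\mathbf{RB}(A)|=|A|^2$, so $\mathbf{RB}$ is locally finite, as required.

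Next I would unwind the canonical relational morphism $\phi:S\to S\times S$ with respect to $A=S$: an element $s\in S$ is related to $(a,b)$ precisely when $s$ admits some factorization $s=a_1a_2\cdots a_n$ over $A$ with $a_1=a$ and $a_n=b$, because the image in $F_\mathbf{RB}(A)$ of such a product is $(a_1,a_n)$. A short case analysis on the length $n$ then yields
\[
(a,b)\phi^{-1}=\{ab\}\cup aSb\quad\text{if }a\ne b,\qquad (a,a)\phi^{-1}=\{a,a^2\}\cup aSa,
\]
the separate summands $\{ab\}$ and $\{a,a^2\}$ arising from factorizations of length $\le 2$ (for which the ``middle'' of the word is empty). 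A direct multiplication check confirms that both sets are indeed subsemigroups of $S$.

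With these inverse images identified, the conclusion is immediate from the criterion: since every element of $F_\mathbf{RB}(A)$ is idempotent, $S\in\mathbf{V}\malc\mathbf{RB}$ if and only if $(a,b)\phi^{-1}\in\mathbf{V}$ for all $(a,b)\in S\times S$, which is exactly the stated two-family condition. I do not anticipate any real obstacle: once the free rectangular band is realised as $A\times A$ the argument is essentially bookkeeping, and the only subtle point is to keep track of the short factorizations that produce the extra terms $ab$, $a$ and $a^2$ alongside $aSb$ and $aSa$.
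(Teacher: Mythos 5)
Your proposal is correct and follows exactly the paper's own route: the paper likewise takes the canonical relational morphism $\phi:S\to RB(S)$ with $RB(S)=S\times S$ as the free rectangular band on the generating set $S$, and identifies $(a,b)\phi^{-1}=\{ab\}\cup aSb$ for $a\ne b$ and $(a,a)\phi^{-1}=\{a,a^2\}\cup aSa$. Your extra bookkeeping on short factorizations just makes explicit what the paper leaves as an observation.
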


\begin{proof} Let  $S$ be a semigroup and
let $RB(S)$ be the free
rectangular band on the set~$S$. That is,  $RB(S)$ is the set
$S\times S$ endowed with multiplication
$(a,b){\cdot}(c,d)=(a,d)$. Let further $\phi:S\to RB(S)$ be the
canonical relational morphism with respect to  $S$;
that is,   $\phi$ is the subsemigroup of ${S\times RB(S)}$ generated
by all pairs $(r,(r,r))$, $r\in S$. Then  $S\in\mathbf{V}\malc
\mathbf{RB}$ if and only if $(a,b)\phi^{-1}\in\mathbf{V}$ for all
$a,b\in S$. Now it remains to observe that  $(a,b)
\phi^{-1}=\{ab\}\cup aSb$ for $a\neq b$ and $(a,a)\phi^{-1}=\{a,a^2\}\cup aSa$.
\end{proof}

Lemma \ref{malcrz}  and Lemma \ref{malcrb} are very special instances of the `basis theorem' for Ma\softl cev products by Pin and Weil, see theorem 4.1 in \cite{pinweil}. As mentioned in the introduction, Ka\softd ourek considered in \cite{kadpcs} semigroups $S$ all of whose subsemigroups of the form $aSb$ with $a,b\in S$ are block groups; he called such semigroups \emph{aggregates of block groups}. Whether or not a semigroup is a block group depends entirely on its set of idempotents. It is further clear that there are no idempotents in  $(\{ab\}\cup aSb)\setminus aSb$ nor in $(\{a,a^2\}\cup aSa)\setminus aSa$. It follows that $\{ab\}\cup aSb$ is a block group if and only if so is $aSb$ and likewise $\{a,a^2\}\cup aSa$ is a block group if and only if so is $aSa$, for arbitrary $a,b\in S$. Consequently, the pseudovariety $\mathbf{BG}\malc\mathbf{RB}$ is comprised exactly  of all aggregates of block groups. This observation is implicitly contained in \cite{kadpcs} but (unfortunately) is not mentioned explicitly. In any case, we may formulate a criterion for membership in $\mathbf{BG}\malc\mathbf{RB}$.

\begin{Cor}\label{agbg} A semigroup $S$ belongs to  $\mathbf{BG}\malc\mathbf{RB}$ if and only if, for each choice of elements $a,b\in S$, the semigroup $aSb$ is a block group.
\end{Cor}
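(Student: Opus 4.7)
The plan is to deduce the corollary directly from Lemma~\ref{malcrb}, which already characterizes membership in $\mathbf{BG}\malc\mathbf{RB}$ by requiring that $\{ab\}\cup aSb$ and $\{a,a^2\}\cup aSa$ both belong to $\mathbf{BG}$ for all $a,b\in S$. The task therefore reduces to removing the auxiliary generators $ab$, $a$, $a^2$ from these subsemigroups without changing whether they are block groups.

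The key observation is that belonging to $\mathbf{BG}$ depends only on the set of idempotents of a semigroup (together with the products of those idempotents living inside it), since the two equivalent definitions recalled in Section~\ref{preliminaries} —  uniqueness of inverses of regular elements, and absence of non-trivial one-sided zero subsemigroups — speak only about idempotents. Hence it suffices to check that $\{ab\}\cup aSb$ and $aSb$ share the same idempotents, and similarly for $\{a,a^2\}\cup aSa$ and $aSa$. Adding $ab$ to $aSb$ can only introduce a new idempotent if $ab$ itself is idempotent; but then $ab=(ab)^2=a(bab)b\in aSb$, so $ab$ is not new. The same trick handles $a^2$: if $a^2$ is idempotent then $a^2=(a^2)^2=a(a^2)a\in aSa$; and if $a$ is idempotent then $a=a^3=a\cdot a\cdot a\in aSa$. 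In every case, the extra generators contribute no new idempotents, so $\{ab\}\cup aSb\in\mathbf{BG}$ iff $aSb\in\mathbf{BG}$, and $\{a,a^2\}\cup aSa\in\mathbf{BG}$ iff $aSa\in\mathbf{BG}$.

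Combining these equivalences with Lemma~\ref{malcrb} yields the equivalent condition: $aSb\in\mathbf{BG}$ for all distinct $a,b\in S$, and $aSa\in\mathbf{BG}$ for all $a\in S$. The second family is the $b=a$ case of the first (the lemma's separation into $a\neq b$ and $a=b$ was only to keep the singleton adjuncts correct), so these conditions collapse to the single statement that $aSb$ is a block group for every choice of $a,b\in S$. No genuine obstacle arises; the only point requiring a sentence of care is verifying that no idempotent is lost when passing from $\{ab\}\cup aSb$ to $aSb$ (resp.\ from $\{a,a^2\}\cup aSa$ to $aSa$), which is exactly the computation above.
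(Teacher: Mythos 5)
Your proposal is correct and follows essentially the same route as the paper: reduce to Lemma~\ref{malcrb}, observe that membership in $\mathbf{BG}$ depends only on idempotents, and check that the adjoined elements $ab$, $a$, $a^2$ contribute no idempotents outside $aSb$ (resp.\ $aSa$). The paper merely asserts this last point as ``clear,'' whereas you carry out the short computation explicitly; otherwise the arguments coincide.
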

Next we present  a similar description of the members of the
semidirect product pseudovariety  $\mathbf{V}*\mathbf{RZ}$, but only for the case when $\mathbf{V}$ is local. Here we
use the derived semigroupoid theorem \cite{tilson, qtheory}.

\begin{Prop}
\label{astrz}     

Let $\mathbf{V}$ be a~local pseudovariety;  a semigroup $S$ belongs to $\mathbf
{V}*\mathbf{RZ}$ if and only if the image $L({a})^\rho$ of each principal left ideal $L({a})$ of $S$  under the right regular
representation $\rho^{L(a)}$ belongs to~$\mathbf{V}$.
\end{Prop}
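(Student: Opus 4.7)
The plan is to invoke Tilson's derived semigroupoid theorem for the canonical relational morphism $\phi\colon S \to RZ(S)$, where $RZ(S)$ is the free right zero semigroup on the set $S$ (exactly the relational morphism used in the proof of Lemma~\ref{malcrz}). Since $\mathbf{RZ}$ is locally finite, this canonical choice suffices: $S \in \mathbf{V}*\mathbf{RZ}$ if and only if the derived semigroupoid $D_\phi$ belongs to the global pseudovariety $\mathbf{gV}$ of semigroupoids. The hypothesis that $\mathbf{V}$ is local is precisely the assertion $\mathbf{gV}=\ell\mathbf{V}$, so $D_\phi \in \mathbf{gV}$ if and only if every local monoid $D_\phi(c,c)$ of $D_\phi$ lies in $\mathbf{V}$. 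The whole proof thus reduces to an identification of these local monoids.

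For the computation, fix a vertex $a \in RZ(S)^1$ with $a \in S$. A triple $(a,s,a)$ represents an arrow of $D_\phi$ iff there is $t \in \phi(s)$ with $at=a$ in $RZ(S)$; since multiplication in $RZ(S)$ is right zero, $at=t$, so one is forced to $t=a$, that is $(s,a)\in\phi$. Because $\phi$ is the subsemigroup of $S\times RZ(S)$ generated by the diagonal, every element of $\phi$ has the form $(x_1\cdots x_n,x_1\cdots x_n)=(x_1\cdots x_n,x_n)$, so $(s,a)\in\phi$ iff $s$ admits a factorization in $S$ ending with $a$, iff $s\in L(a)$. Composition of arrows in $D_\phi$ is induced by multiplication in $S$, which keeps $L(a)$ closed since $L(a)$ is a left ideal of $S$. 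Consequently, the local monoid $D_\phi(a,a)$ is represented by $L(a)$ with the multiplication inherited from $S$, modulo a certain equivalence.

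The crux of the argument, and the step I expect to be the main obstacle, is to check that this equivalence is precisely $\ker\rho^{L(a)}$, so that $D_\phi(a,a)\cong L(a)^\rho$. Two triples $(a,s_1,a)$ and $(a,s_2,a)$ are identified in $D_\phi$ exactly when they act in the same way in every admissible composition context. In the present situation the admissible precomposition arrows ending at $a$ are represented by the elements of the fibre $\phi^{-1}(a)=L(a)$, while postcomposition is absorbed into the multiplication in $S$; unwinding Tilson's definition therefore yields that the equivalence on $L(a)$ amounts to $us_1=us_2$ for every $u\in L(a)$, which is exactly $\rho_{s_1}=\rho_{s_2}$ on $L(a)$. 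The local monoid at the adjoined identity $1\in RZ(S)^1$ is trivial and can be ignored. Combining this identification $D_\phi(a,a)\cong L(a)^\rho$ with the reduction of the first paragraph gives the stated equivalence; the main technical burden is the careful verification that Tilson's abstract arrow equivalence really does collapse to the right regular representation congruence.
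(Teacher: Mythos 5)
Your proposal is correct and takes essentially the same route as the paper's own proof: the canonical relational morphism $S\to RZ(S)$, Tilson's derived semigroupoid theorem combined with the locality of $\mathbf{V}$, and the identification of the local semigroup of $D_\phi$ at an object $a$ with $L(a)^\rho$ acting on the fibre $a\phi^{-1}=L(a)$. The only differences are cosmetic: you spell out the arrow-equivalence computation that the paper compresses into one sentence, and the local semigroup at the adjoined object $1$ is actually empty rather than trivial, which changes nothing.
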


\begin{proof} Let $S$ be a semigroup, let $RZ(S)$
be the (free) right zero semigroup on the set~$S$ and let
$\phi:S\to RZ(S)$ be the canonical relational morphism (with
respect to $S$). Then, according to the
derived semigroupoid theorem, the semigroup~$S$
belongs to $\mathbf{V}*\mathbf{RZ}$ if and only if the derived
semigroupoid $D_\phi$ of $\phi$ divides a~member of~$\mathbf{V}$. (For a proof that the consideration of the canonical relational morphism $S\to RZ(S)$ suffices here,  see  Proposition 3.5 in combination with Theorem 3.3 in \cite{almeidaweil}.)
 Since $\mathbf{V}$ is local, this happens
if (and only if) each local semigroup of $D_\phi$ is in~$\mathbf{V}$.
The set of objects of the {derived} semigroupoid $D_\phi$
is $RZ(S)\cup\{1\}\ =S\cup\{1\}$  where $1$
represents a new identity adjoined to $S$. However, the local
semigroup at the object~$1$ is  empty. So
consider any other object~${a}$, say. Then, as in the proof of Lemma \ref{malcrz}, we
have ${a}\phi^{-1}=L({a})$, the principal
left ideal of~$S$ generated by~$a$. According to the construction of the derived
semigroupoid $D_\phi$, the local semigroup at
the given object~${a}$ may be identified with the semigroup of all mappings
$L({a})\to L({a})$ induced by multiplication
on the right by elements of  $L(a)$ which semigroup by definition coincides with $L({a})^\rho$.
\end{proof}
Finally, we recall the well-known equalities
$$\mathbf{CS}=\mathbf{G}\malc\mathbf{RB}\mbox{ and }\mathbf{CS}=\mathbf
{G}*\mathbf{RZ}.$$ The former equality is a consequence of Proposition \ref{malcrb} while the latter
is a consequence of Proposition \ref{astrz}.

\subsection{Pseudoidentity bases  of pseudovarieties of the
form $\mathbf{V}*\mathbf{RZ}$}

Proposition \ref{astrz}     
provides us with a~method to obtain a basis of pseudoidentities of $\mathbf{V}*\mathbf{RZ}$ from a basis of~$\mathbf{V}$ if $\mathbf
{V}$ is a local pseudovariety.

\begin{Cor}
\label{basis*rz}

Let  {$\mathbf{V}$ be a local pseudovariety and let $\Sigma$ be a basis of pseudoidentities of $\mathbf{V}$.
{Then} a basis of $\mathbf{V}*\mathbf{RZ}$ is given by the set of  }all pseudoidentities of the form
     \[x\pi(y_1x,\dots,y_nx)
     =x\sigma(y_1x,\dots,y_nx)\]
where the pseudoidentity $\pi(x_1,\dots,x_n)=\sigma(x_1,\dots,x_n)$ is a~member of\,\
$\Sigma$ and, for each~$i$, $y_i$ can take the value $x_i$ or the empty
value, and $x$ is a~variable not {contained} in $\{x_1,\dots,x_n\}$.
\end{Cor}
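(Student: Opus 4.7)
The plan is to apply Proposition \ref{astrz} to reduce membership of $S$ in $\mathbf{V}*\mathbf{RZ}$ to a condition on the semigroups $L(a)^\rho$ (one for each $a\in S$), and then to unfold that condition into concrete pseudoidentities on $S$.

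First I would invoke Proposition \ref{astrz}: $S\in\mathbf{V}*\mathbf{RZ}$ if and only if $L(a)^\rho\in\mathbf{V}$ for every $a\in S$. Since $\Sigma$ is a basis of $\mathbf{V}$, this is equivalent to the requirement that each $L(a)^\rho$ satisfy every pseudoidentity $\pi(x_1,\dots,x_n)=\sigma(x_1,\dots,x_n)$ belonging to $\Sigma$.

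Next I would translate satisfaction in $L(a)^\rho$ into a condition on $S$ itself. Fix $\pi=\sigma$ in $\Sigma$ and elements $s_1,\dots,s_n\in L(a)$. Because $\rho^{L(a)}\colon L(a)\twoheadrightarrow L(a)^\rho$ is a surjective homomorphism of finite semigroups, it commutes with implicit operations, so $\pi(\rho_{s_1},\dots,\rho_{s_n})=\rho_{\pi(s_1,\dots,s_n)}$ and similarly for $\sigma$. Hence $L(a)^\rho$ satisfies $\pi=\sigma$ if and only if, for every such tuple, the right translations $\rho_{\pi(s_1,\dots,s_n)}$ and $\rho_{\sigma(s_1,\dots,s_n)}$ coincide on $L(a)$. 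Every element of $L(a)$ is either $a$ or of the form $ta$ with $t\in S$, and the identity $(ta)u=t(au)$ shows that two right translations of $L(a)$ agree throughout $L(a)$ as soon as they agree at the single point $x=a$. The condition therefore reduces to
\[
a\,\pi(s_1,\dots,s_n)=a\,\sigma(s_1,\dots,s_n) \quad\text{in } S,\ \text{for all } s_1,\dots,s_n\in L(a).
\]

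Finally I would parametrize each $s_i\in L(a)$: either $s_i=a$ or $s_i=y_ia$ with $y_i\in S$. The first option corresponds to the empty value for $y_i$ (so that the $i$-th argument of $\pi$ becomes just $x$), while the second corresponds to $y_i$ taking the value $x_i$ (making the $i$-th argument $y_ix$). With $x$ playing the role of $a$, the $2^n$ possible choices for the $y_i$'s produce precisely the $2^n$ pseudoidentities of the claimed basis; as $a$ runs through $S$, the variable $x$ runs freely, completing the chain of equivalences. The only slightly delicate point is the commutation of $\rho^{L(a)}$ with implicit operations, but that is automatic for any homomorphism of finite semigroups; everything else is a matter of unfolding definitions and indexing the $s_i$'s correctly.
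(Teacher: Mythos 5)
Your proposal is correct and follows essentially the same route as the paper: both reduce membership in $\mathbf{V}*\mathbf{RZ}$ to the condition $L(a)^\rho\in\mathbf{V}$ via Proposition~\ref{astrz}, pass through the right regular representation to identify $\pi(\rho_{s_1},\dots,\rho_{s_n})$ with $\rho_{\pi(s_1,\dots,s_n)}$, and parametrize the elements of $L(a)$ as $a$ or $y_ia$ to recover the $2^n$ pseudoidentities. Your explicit observation that two right translations of $L(a)$ agree everywhere as soon as they agree at the point $a$ (via $(ta)u=t(au)$) is exactly the step the paper leaves implicit when it says the converse follows by ``reading the preceding paragraph backward,'' so the argument is sound and complete.
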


\begin{proof} Suppose that the semigroup $S$ belongs to
 $\mathbf{V}*\mathbf{RZ}$ and let $\pi=\sigma$ be a pseudoidentity from $\Sigma$. Choose any ${a}\in S$
and any ${b}_1,\dots,{b}_n\in S\cup\{1\}$ where $1$ represents
a~new identity {adjoined} to~$S$. Then ${b}_1{a},\dots,
{b}_n{a}$ and therefore also $\pi({b}_1{a},\dots,
{b}_n{a})$ and $\sigma({b}_1{a},\dots,{b}_n
{a})$ are well defined elements of {the principal left ideal}
$L({a})$. Since $L({a})^\rho$ belongs to~$\mathbf{V}$
{by Proposition~\ref{astrz}} and since~$\mathbf{V}$
satisfies the pseudoidentity $\pi(x_1,\dots,x_n)=\sigma(x_1,\dots,x_n)$,
{substituting $\rho_{b_ia}$ for $x_i$, for each~$i$,} we obtain
    \[\pi(\rho_{{b}_1{a}},\dots,\rho_{{b}_n{a}})=
    \sigma(\rho_{{b}_1{a}},\dots,\rho_{{b}_n{a}})\]
and therefore also
    \[\rho_{\pi({b}_1{a},\dots,{b}_n{a})}=
    \rho_{\sigma({b}_1{a},\dots,{b}_n{a})}.\]
 This means that multiplication by
$\pi({b}_1{a},\dots,{b}_n{a})$ and  {by} $\sigma
({b}_1{a},\dots,{b}_n{a})$ on the right induces
the same transformation $L({a})\to L({a})$.
In particular,
the elements $a\pi(b_1a,\dots,b_na)$ and $a\sigma(b_1a,\dots,b_na)$ coincide.
Since $a\in S$ and $b_1,\dots b_n\in S\cup\{1\}$ have been arbitrarily chosen, $S$ satisfies each  pseudoidentity of the (above-mentioned) form
     \[x{\pi(y_1x,\dots,y_nx)}=x\sigma(y_1x,\dots,y_nx).\]

Let conversely $S$ be a~semigroup which satisfies all
{pseudo}identities of the form $x\pi(y_1x,\dots,y_nx)
=x\sigma(y_1x,\dots,y_nx)$ {described above}. Then,
reading the preceding paragraph backward, we see that the image
$L({a})^\rho$ of each principal left ideal $L(a)$ of $S$ under $\rho^{L(a)}$
satis\-fies all pseudoidentities
from $\Sigma$. This means that
$L({a})^\rho$ belongs to $\mathbf{V}$ and so $S$ belongs  to $\mathbf{V}*\mathbf{RZ}$ by
Proposition~\ref{astrz}.
\end{proof}

Corollary \ref{basis*rz} is known and has been formulated as Theorem 2.2 in
\cite{auintrotter};  the well-informed reader will recognize  it as a very special case (of a
proved instance) of the Almeida--Weil basis theorem  \cite[Cor.
5.4]{almeidaweil}. For more information on the basis theorem we refer the interested reader to Section~7
in~\cite{almeidaprof} and to  Section 3.7
in~\cite{qtheory}.
\section{The main result}
\label{mainresult}

\subsection{Semantic characterization of the members
of $\mathbf{PCS}$}
The main result of the paper  can be
formulated as follows.

\begin{Thm}
\label{pcsisequalto}

The following equalities of pseudovarieties hold:
     \[\mathbf{PCS}=\mathbf{J}\malc\mathbf{CS}=
     \mathbf{BG}\malc\mathbf{RB}=\mathbf{ER}\malc\mathbf{RZ}\cap
     \mathbf{EL}\malc\mathbf{LZ}=\mathbf{BG}*\mathbf{RZ}.\]
\end{Thm}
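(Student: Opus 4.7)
The plan is to close a cycle of inclusions among the five pseudovarieties. Ka\softd ourek's containment $\mathbf{PCS}\subseteq\mathbf{J}\malc\mathbf{CS}$, the identification $\mathbf{J}\malc\mathbf{CS}=\mathbf{BG}\malc\mathbf{RB}$ via Corollary~\ref{agbg}, and Steinberg's equality $\mathbf{PCS}=\mathbf{BG}*\mathbf{RZ}$ together yield $\mathbf{BG}*\mathbf{RZ}\subseteq\mathbf{BG}\malc\mathbf{RB}$ and fix three of the five objects. What remains is to slot $\mathbf{ER}\malc\mathbf{RZ}\cap\mathbf{EL}\malc\mathbf{LZ}$ into the chain and to close the cycle with $\mathbf{BG}\malc\mathbf{RB}\subseteq\mathbf{BG}*\mathbf{RZ}$.

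For the equality $\mathbf{BG}\malc\mathbf{RB}=\mathbf{ER}\malc\mathbf{RZ}\cap\mathbf{EL}\malc\mathbf{LZ}$ I would argue combinatorially via Lemma~\ref{malcrz} and Corollary~\ref{agbg}. The inclusion $\supseteq$ is immediate: any non-trivial right zero $\{e,f\}\subseteq aSb$ also lies in $L(b)$, contradicting $L(b)\in\mathbf{ER}$, and dually for left zeros. For $\subseteq$, given a non-trivial right zero $\{e,f\}\subseteq L(c)$, a short case analysis excludes $e=c$ or $f=c$, so $e=sc$, $f=tc$ with $s,t\in S$; then $e=fe=f\cdot sc\in fSc$ and $f=f^2=f\cdot tc\in fSc$ place the right zero inside $fSc$, contradicting $fSc\in\mathbf{BG}$. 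Dually, a non-trivial left zero $\{x,y\}=\{cs,ct\}\subseteq R(c)$ sits inside $cSx$ via $x=x^2=cs\cdot x\in cSx$ and $y=yx=ct\cdot x\in cSx$, contradicting $cSx\in\mathbf{BG}$.

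To close the cycle I intend $\mathbf{BG}\malc\mathbf{RB}\subseteq\mathbf{BG}*\mathbf{RZ}$ and envisage two parallel treatments. Semantically, Proposition~\ref{astrz} (relying on locality of $\mathbf{BG}$) reduces the claim to showing $L(a)^\rho\in\mathbf{BG}$ for every $a\in S\in\mathbf{BG}\malc\mathbf{RB}$. Here $L(a)^\rho\in\mathbf{ER}$ is inherited from $L(a)\in\mathbf{ER}$ under the surjection $\rho^{L(a)}$; for $L(a)^\rho\in\mathbf{EL}$, a hypothetical non-trivial left zero $\{\rho_x,\rho_y\}$ translates, after replacing $x,y$ by idempotent powers, into distinct idempotents $x,y\in L(a)$ with $xy=x$, $yx=y$ and $ax\neq ay$. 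Syntactically, Corollary~\ref{basis*rz} applied to the basis $\{(x^\omega y^\omega)^\omega=(y^\omega x^\omega)^\omega\}$ of $\mathbf{BG}$ reduces the inclusion to the pseudoidentities $x((y_1x)^\omega(y_2x)^\omega)^\omega=x((y_2x)^\omega(y_1x)^\omega)^\omega$; setting $p=(ua)^\omega$, $q=(va)^\omega$, $p'=(au)^\omega$, $q'=(av)^\omega$, iteration of the identity $ap=p'a$ yields $a(pq)^\omega=(p'q')^\omega a$ and $a(qp)^\omega=(q'p')^\omega a$, so the pseudoidentity becomes $(p'q')^\omega a=(q'p')^\omega a$ inside the block group $aSa$.

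I expect the principal obstacle to lie in the final step. For idempotent $a$, direct computation using $sata=sa$ and $a^2=a$ gives $(ax)(ay)=ax$ and $(ay)(ax)=ay$, producing a non-trivial left zero $\{ax,ay\}$ in $aSa$ and contradicting $aSa\in\mathbf{BG}$; the parallel syntactic equation $(p'q')^\omega a=(q'p')^\omega a$ then follows from the $\mathbf{BG}$-pseudoidentity in $aSa$ applied to the idempotents $(p'a)^\omega$ and $(q'a)^\omega$, using standard manipulations such as $(ap)^\omega p=p(ap)^\omega$. The essential use of $a^2=a$ in these computations is the difficulty: in the general case one must replace $a$ by a suitable idempotent (naturally $a^\omega$ or an idempotent of $Sa$) and verify that the left-zero structure and the relevant $\omega$-powers are faithfully transported under this replacement.
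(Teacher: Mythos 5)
Your overall architecture is workable modulo the external results you import (Steinberg's full equality $\mathbf{PCS}=\mathbf{BG}*\mathbf{RZ}$ and Ka\softd ourek's theorems; note that Corollary~\ref{agbg} alone does \emph{not} yield $\mathbf{J}\malc\mathbf{CS}=\mathbf{BG}\malc\mathbf{RB}$ --- that equality is Ka\softd ourek's, whereas the paper needs and proves only $\mathbf{J}\malc\mathbf{CS}\subseteq\mathbf{BG}\malc\mathbf{RB}$ via $\mathbf{BG}=\mathbf{J}\malc\mathbf{G}$), and your two-sided argument for $\mathbf{BG}\malc\mathbf{RB}=\mathbf{ER}\malc\mathbf{RZ}\cap\mathbf{EL}\malc\mathbf{LZ}$ is correct and essentially the paper's step~3 together with its easy converse. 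The genuine gap sits exactly where you place the ``principal obstacle'': the inclusion $\mathbf{BG}\malc\mathbf{RB}\subseteq\mathbf{BG}*\mathbf{RZ}$ is the new content of the whole theorem (it is what resolves Ka\softd ourek's open question), and your treatment of it fails in two ways. First, a non-trivial left zero pair $\{\rho_x,\rho_y\}$ in $L(a)^\rho$ does not translate into idempotents $x,y\in L(a)$ with $xy=x$ and $yx=y$: identities among right translations of $L(a)$ only record the corresponding relations after left multiplication by $a$, so all you may assume is $ax^2=ax$, $ay^2=ay$, $axy=ax$, $ayx=ay$, together with $ax\ne ay$; replacing $x,y$ by idempotent powers does not upgrade these to genuine left zero relations in $S$. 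Second, your computation placing $\{ax,ay\}$ inside $aSa$ really does require $a^2=a$, while Proposition~\ref{astrz} demands $L(a)^\rho\in\mathbf{BG}$ for \emph{every} $a$; passing to $a^\omega$ changes the principal left ideal, and there is no evident transport of the left zero structure from $L(a)^\rho$ to $L(a^\omega)^\rho$ (for instance, $ax\ne ay$ does not imply $a^\omega x\ne a^\omega y$). So the case you leave open is not a routine verification, and the proposal as it stands does not prove the theorem.

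The repair is available inside your own framework and needs no idempotency of $a$. Write $x=ca$ and $y=da$ with $c,d\in S\cup\{1\}$ and rebracket the four equalities above: $a(ca)(ca)=a(ca)$ reads as $(ac)(ac)a=(ac)a$, $a(ca)(da)=a(ca)$ reads as $(ac)(ad)a=(ac)a$, and so on. Since an inner left translation of $R(a)=aS\cup\{a\}$ is determined by its value at $a$, these say precisely that $\{\lambda_{ac},\lambda_{ad}\}$ is a left zero subsemigroup of ${}^\lambda\!R(a)$. But $R(a)\in\mathbf{EL}$ (from $S\in\mathbf{EL}\malc\mathbf{LZ}$ and Lemma~\ref{malcrz}), hence so is its quotient ${}^\lambda\!R(a)$, forcing $\lambda_{ac}=\lambda_{ad}$, i.e.\ $(ac)a=(ad)a$, i.e.\ $ax=ay$ --- a contradiction. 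This ``shifting brackets'' duality between right translations of $L(a)$ and left translations of $R(a)$ is the paper's key device in its step~4; it is also what makes the syntactic variant ($x((yx)^\om(zx)^\om)^\om=x((zx)^\om(yx)^\om)^\om$) go through without the unverified $\omega$-power manipulations you list at the end.
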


\begin{proof} We shall verify the inclusions
\[\mathbf{PCS}\subseteq\mathbf{J}\malc\mathbf{CS}\subseteq
     \mathbf{BG}\malc\mathbf{RB}\subseteq\mathbf{ER}\malc\mathbf{RZ}\cap
     \mathbf{EL}\malc\mathbf{LZ}\subseteq\mathbf{BG}*\mathbf{RZ}\subseteq
     \mathbf{PCS}.\]

1. The inclusion $\mathbf{PCS}\subseteq\mathbf{J}\malc\mathbf{CS}$ has been proved by Ka\softd ourek \cite[Theorem 4.2]{kadpcs}. Our proof uses the same relational morphism but otherwise is different and shorter.
Since $\mathbf{P'CS}=\mathbf{PCS}$ it is
sufficient to show that $\mathfrak{P}(S)$  {belongs to} $\mathbf{J}\malc
\mathbf{CS}$  for each $S\in\mathbf{CS}$.

So, let $S$ be a completely simple semigroup. Then the {quotient}
set $S/\mathrsfs{R}$ forms a~left zero semigroup under set-wise
multiplication and the {quotient} mapping $R:S\to S/\mathrsfs
{R}$, ${a}\mapsto R_{{a}}$ is a~homomorphism.
{Likewise, the quotient set} $S/\mathrsfs{L}$ is a~right zero
semigroup and the {quotient} mapping $L:S\to S/\mathrsfs{L}$,
${a}\mapsto L_{{a}}$ is a~homomorphism. Denote by
{ {$\mathfrak{R}$}}
and  {$\mathfrak{L}$} the respective  {induced {homomorphisms}}\,\
$ {{\mathfrak{R}}}:\mathfrak{P}(S)\to\mathfrak{P}(S/\mathrsfs{R})$ and
$ {\mathfrak{L}}:\mathfrak{P}(S)\to\mathfrak{P}(S/\mathrsfs{L})$. Note that
$\mathfrak{P}(S/\mathrsfs{R})$ is  a left zero semigroup
 while
$\mathfrak{P}(S/\mathrsfs{L})$ is  a right zero semigroup.
Moreover, for every $X\in\mathfrak{P}(S)$  {we have}
     \[X {\mathfrak{R}}=\{{U}\in S/\mathrsfs{R}\mid X\cap{U}\neq
     \emptyset\}\mbox{\ \ and\ \ }X {\mathfrak{L}}=\{{V}\in
    S/\mathrsfs{L}\mid X\cap{V}\neq\emptyset\}.\]
Consider the direct product
     \[T:=\mathfrak{P}(S/\mathrsfs{R})\times S\times\mathfrak{P}
     (S/\mathrsfs{L})\]
 {which} is again a~completely simple semigroup. For
{every} $X\in\mathfrak{P}(S)$ set
     \[X\Phi:=\{(X {\mathfrak{R}},x,X {\mathfrak{L}})\mid x\in X\}
     =\{X {\mathfrak{R}}\}\times X\times\{X {\mathfrak{L}}\}.\]
Then $\Phi$ is  a relational morphism $\mathfrak{P}
(S)\to T$.
Let $(\mathcal{A},e,\mathcal{B})$ be an idempotent of $T$ with $(\mathcal {A},e,\mathcal{B})\Phi^{-1}$ non-empty; note that $e$ is an idempotent of $S$.  We intend to show that the semigroup $(\mathcal {A},e,\mathcal{B})\Phi^{-1}$ is $\mathrsfs{J}$-trivial. In order to do so, it is sufficient to verify that   the set $\mathrm{Reg}((\mathcal {A},e,\mathcal{B})\Phi^{-1})$ of all  regular elements is contained in a $\mathrsfs{J}$-trivial subsemigroup of $(\mathcal {A},e,\mathcal{B})\Phi^{-1}$. Then each regular $\mathrsfs{J}$-class of $(\mathcal {A},e,\mathcal{B})\Phi^{-1}$ is trivial whence the semigroup $(\mathcal {A},e,\mathcal{B})\Phi^{-1}$ is itself  $\mathrsfs{J}$-trivial.

 Set
$$S(\mathcal{A},\mathcal{B}):=\bigcup_{U\in \mathcal{A},V\in \mathcal{B}}U\cap V$$ and let $I$ be the (completely simple) subsemigroup generated by all idempotents of $S(\mathcal{A},\mathcal{B})$. Then $I$ is an  idempotent  of $(\mathcal {A},e,\mathcal{B})\Phi^{-1}$. Let
$$\mathfrak{M}_I:=\{X\in (\mathcal {A},e,\mathcal{B})\Phi^{-1}\mid IX=XI=X\}$$ be the local submonoid of $(\mathcal {A},e,\mathcal{B})\Phi^{-1}$ with respect to $I$. We claim that $X\supseteq I$ for each $X\in \mathfrak{M}_I$. Indeed, let $X\in \mathfrak{M}_I$ and, for an arbitrary idempotent $g$ of $I$ let $I_g$ be the $\mathrsfs{H}$-class of $I$ containing $g$. First observe that $I_e=eI_e\subseteq XI=X$ since $e\in X$. Next, let $f$ be the unique idempotent such that $e\mathrel{\mathrsfs{R}}f\mathrel{\mathrsfs{L}}g$; then $I_g=gI_ef\subseteq IXI=X$. So, $X$ contains each $\mathrsfs{H}$-class of $I$ and thus contains $I$ itself. Therefore, the monoid $\mathfrak{M}_I$ is ordered by reverse inclusion $\supseteq$ with $I$ the greatest element, and so $\mathfrak{M}_I$ is $\mathrsfs{J}$-trivial \cite{pinorder}. It suffices  now to show that $\mathrm{Reg}((\mathcal {A},e,\mathcal{B})\Phi^{-1})$ is contained in $\mathfrak{M}_I$.

For each $X\subseteq S$ denote by $E_X$ the set of all idempotents of $X$.
Let  $F$ be an idempotent of $(\mathcal {A},e,\mathcal{B})\Phi^{-1}$; then $F$ is a (completely simple) subsemigroup of $S(\mathcal{A},\mathcal{B})$ with
$F\mathfrak{R}=\mathcal{A}$ and $F\mathfrak{L}=\mathcal{B}$. Let $U\in\mathcal{A}$, $V\in \mathcal{B}$, $a\in F\cap U$, $b\in F\cap V$; then $(ab)^\om\in F\cap U\cap V$. It follows that $F$ contains all idempotents of $S(\mathcal{A},\mathcal{B})$, hence $F\supseteq I$ and $E_F=E_I$. Then
$F=F^2\supseteq FI\supseteq FE_I=FE_F=F$ whence $FI=F$ and dually also $IF=F$. Moreover, for each  regular element $X$ of $(\mathcal {A},e,\mathcal{B})\Phi^{-1}$ there exist idempotents $F,G$ such that $FX=X=XG$; then $IX=IFX=FX=X$ and dually also $XI=X$. Consequently, $\mathrm{Reg}((\mathcal{A},e,\mathcal{B})\Phi^{-1})$ is contained in $\mathfrak{M}_I$, as required.

2. The inclusion $\mathbf{J}\malc\mathbf{CS}\subseteq
\mathbf{BG}\malc\mathbf{RB}$ is a consequence of the
equality $\mathbf{BG}=\mathbf{J}\malc\mathbf{G}$ (see Section 7 in \cite{pinsuccess})  and of  standard facts
{concerning} the operation $\malc$ {(see Lemma 1.4
in~\cite{weildotdepth})}:
     \[\mathbf{J}\malc\mathbf{CS}
     =\mathbf{J}\malc(\mathbf{G}\malc\mathbf{RB})
     \subseteq(\mathbf{J}\malc\mathbf{G})\malc\mathbf{RB}
     =\mathbf{BG}\malc\mathbf{RB}.\]

3. An indirect argument will be used to prove the
inclusion
     \[\mathbf{BG}\malc\mathbf{RB}\subseteq
     \mathbf{ER}\malc\mathbf{RZ}\cap\mathbf{EL}\malc\mathbf{LZ}.\]
Indeed, suppose that $S\notin\mathbf{ER}\malc\mathbf{RZ}$. Then,
according to Lemma~\ref{malcrz}, there exists a principal left ideal $L({a})$
of $S$ which does not belong to $\mathbf{ER}$.
Consequently there exist distinct idempotents $e,f$ in $L(a)$
forming a (non-trivial) right zero subsemigroup.  This right zero
subsemigroup  then is, in fact, contained in the left
ideal $Sa$. However, since $e\{e,f\}=\{e,f\}$, the
right zero subsemigroup $\{e,f\}$ is even
contained in  $eSa$, so that the latter subsemigroup of $S$ cannot be a block group.  Corollary \ref{agbg}
now implies that $S\notin\mathbf{BG}\malc\mathbf{RB}$. By the dual argument,
one {also} shows that $S\notin\mathbf{EL}\malc\mathbf{LZ}$
{entails} {that} $S\notin\mathbf{BG}\malc\mathbf{RB}$.

4. In order to prove the inclusion
     \[\mathbf{ER}\malc\mathbf{RZ}\cap\mathbf{EL}\malc\mathbf{LZ}
     \subseteq \mathbf{BG}*\mathbf{RZ},\]
 {suppose} that $S\in\mathbf{ER}\malc\mathbf{RZ}\cap\mathbf{EL}
\malc\mathbf{LZ}$. According to Lemma~\ref{malcrz} this means that
each principal left ideal of~$S$ is in $\mathbf{ER}$ and each
principal right ideal of~$S$ is in $\mathbf{EL}$. It is well known and easy to see that $\mathbf{BG}$ is local (by taking advantage of the \emph{consolidation operation}  \cite{tilson, qtheory} --- the reader may also  consult the proof of Proposition 1  in \cite{steinbpcs}). Therefore we may now apply Proposition~\ref{astrz}: in order to show that $S$ belongs to $\mathbf{BG}*\mathbf{RZ}$,  it is sufficient to verify
that the image
$L({a})^\rho$ of each principal left ideal $L({a})$ of $S$  under $\rho^{L(a)}$ belongs to
$\mathbf{ER}\cap\mathbf{EL}$ ($=\mathbf{BG}$). Since each principal
left ideal $L({a})$ is in $\mathbf{ER}$ so is its image
$L({a})^\rho$. It remains to prove that $L({a})^\rho$ is in $\mathbf{EL}$. We need to show that each (at most) two-element left zero
subsemigroup $\{\rho_{ca},\rho_{da}\}$ of $L({a})^\rho$ is trivial; here $ca,da$ are arbitrary elements of $L(a)$ (${c}$ and/or ${d}$
may be the empty symbol). By assumption, the equalities
     \[{a}({ca})={a}({ca})^2,\
     {a}({da})={a}({da})^2,\
     {a}({ca})({da})={a}({ca}),\
     {a}({da})({ca})={a}({da})\]
hold. Shifting the brackets to the left we
observe that the inner left translations $\lambda_{{ac}},\lambda_{{ad}}:
R({a})\to R({a})$   form a left zero subsemigroup of ${}^\lambda\!{R(a)}$, the
image of $R({a})$ under $\lambda^{R(a)}$.  Since
$R({a})$ is in $\mathbf{EL}$ so is {${}^\lambda\!{R(a)}$}. Hence $\{\lambda_{ac},\lambda_{ad}\}$ is a trivial semigroup and so $\lambda_{ac}=\lambda_{ad}$. It follows that $(ac)a=(ad)a$  and
therefore also $a(ca)=a(da)$ hold. But then
$\rho_{{ca}}=\rho_{{da}}$, that is, the left zero semigroup $\{\rho_{ca},\rho_{da}\}$ is  trivial.

5. Finally, in order to verify the inclusion
$\mathbf{BG}*\mathbf{RZ}\subseteq\mathbf{PCS}$, we may argue exactly as in \cite{steinbpcs}, namely we may use the inclusion $\mathbf{PG}*\mathbf{RZ}\subseteq\mathbf{P}(\mathbf{G}*\mathbf{RZ})$
 (proved  in~\cite{auinsteinbdivisions}) and  the equalities $\mathbf{PG}=\mathbf{BG}$ and $\mathbf{CS}=\mathbf{G}*\mathbf{RZ}$.
 \end{proof}

Just as in
{Theorem~4.2 of}~\cite{kadpcs}, the first step in the proof
of Theorem~\ref{pcsisequalto} actually proves the inclusion
$\mathbf{PQ}\subseteq\mathbf{J}\malc\mathbf{Q}$ for \textbf{each}
pseudovariety $\mathbf{Q}$ of completely simple semigroups.
The reason for this is that the left zero
semigroup $\mathfrak{P}(S/\mathrsfs{R})$ is trivial if and only if so is $S/\mathrsfs{R}$ and the
right zero semigroup $\mathfrak{P}(S/\mathrsfs{L})$ is trivial if and only if so is $S/\mathrsfs{L}$. Consequently, both power semigroups belong to the
pseudovariety generated {by}~$S$,
and therefore {also}~$T$ belongs
to the pseudovariety generated {by}~$S$.

As already mentioned,  two papers  are  devoted to the study of the pseudovariety $\mathbf{PCS}$, namely Steinberg's paper \cite{steinbpcs} and Ka\softd ourek's paper \cite{kadpcs}. The main result in the former is  the inclusion $\mathbf{PCS}\subseteq \mathbf{BG}*\mathbf{RZ}$ while the main results of the latter are the inclusion $\mathbf{PCS}\subseteq \mathbf{J}\malc \mathbf{CS}$ and the equality $\mathbf{BG}\malc\mathbf{RB}=\mathbf{J}\malc\mathbf{CS}$. Whereas the inclusion of the latter paper forms an essential ingredient of our proof we get the inclusion of the former  and the equality of the latter  for free.
 It should be noted however, that we do make use of the celebrated equality
$\mathbf{PG}=\mathbf{BG}$.

The last three items in the chain of equalities in Theorem~\ref{pcsisequalto} give rise to three (equivalent)  structural characterizations of the members of $\mathbf{PCS}$. These characterizations in turn lead to transparent algorithms for testing membership in $\mathbf{PCS}$. Indeed, immediately from Corollary \ref{agbg} we get: a semigroup $S$ belongs to  $\mathbf{PCS}$ if and only if $aSb$ is a block group for all $a,b\in S$ that is, if and only if it is an aggregate of block groups in the sense of Ka\softd ourek \cite{kadpcs}. Next, expressing membership in $\mathbf{ER}\malc\mathbf{RZ}\cap \mathbf{EL}\malc\mathbf{LZ}$ in terms of Lemma \ref{malcrz} we  obtain: a semigroup $S$ belongs to  $\mathbf{PCS}$ if and only if each principal left ideal of $S$ is in $\mathbf{ER}$ and each principal right ideal of $S$ is in $\mathbf{EL}$. Finally, applying Proposition \ref{astrz} to $\mathbf{BG}*\mathbf{RZ}$ we see: a semigroup $S$ belongs to $\mathbf{PCS}$ if and only if the image $L(a)^\rho$ of each principal left ideal $L(a)$ of $S$ under the right regular representation $\rho^{L(a)}$ is a block group. In the latter two cases, the one-sided ideals $L(a)$ and $R(a)$ may be replaced with the one-sided ideals $Sa$ and $aS$, respectively.

The equality $\mathbf{PG}=\mathbf{BG}$ has two analogues in the completely simple case, namely, on the one hand Steinberg's equality
 $$\mathbf{PCS}=\mathbf{P}(\mathbf{G}*\mathbf{RZ})=\mathbf{BG}*\mathbf{RZ},$$ on the other hand the equality
 $$\mathbf{PCS}=\mathbf{P}(\mathbf{G}\malc\mathbf{RB})=\mathbf{BG}\malc\mathbf{RB}.$$
Finally, both equalities $\mathbf{PG}=\mathbf{J}*\mathbf{G}$ and $\mathbf{PG}=\mathbf{J}\malc\mathbf{G}$ (see \cite{pinsuccess}) have their obvious analogues, namely
$$\mathbf{PCS}=\mathbf{J}*\mathbf{CS}\mbox{ and }\mathbf{PCS}=\mathbf{J}\malc\mathbf{CS}.$$
The latter occurs in the statement of Theorem \ref{pcsisequalto} and is important  in its proof. In contrast, the author is not aware of a direct argument for the former: it follows merely from the equalities $\mathbf{PCS}=\mathbf{BG}*\mathbf{RZ}$, $\mathbf{BG}=\mathbf{J}*\mathbf{G}$, $\mathbf{CS}=\mathbf{G}*\mathbf{RZ}$ and the associativity of the operation $*$, see \cite{kadpcs}.

\subsection{Syntactic characterization of the members of
$\mathbf{PCS}$}

The last three items in the chain of equalities  in Theorem~\ref{pcsisequalto} also give rise to three
syntactic characterizations of the pseudovariety $\mathbf{PCS}$.
First, using the fact that  $\mathbf{BG}$ is
defined by the pseudoidentity $(x^\om y^\om)^\om=(y^\om x^\om)^\om$,
it follows from   Corollary \ref{agbg} that  \mbox{$\mathbf{BG}\malc
\mathbf{RB}$} is defined by the single pseudoidentity
     \[((sxt)^\om(syt)^\om)^\om=((syt)^\om(sxt)^\om)^\om,\]
see \cite{kadpcs}. This statement can also be deduced from
the already mentioned Pin--Weil basis theorem for Ma\softl cev products \cite{pinweil}. Further on, as mentioned in the preliminaries,
the pseudoidentity
     $(y^\om z^\om)^\om =(y^\om z^\om)^\om y^\om$ defines $\mathbf{ER}$.
Moreover, every idempotent in the principal left ideal $L(x)$
of a semigroup~$S$  can be written
as $(ax)^\om$ for some element~$a$ of $S$. {In view of Lemma~\ref
{malcrz}, it} follows that the pseudovariety $\mathbf{ER}\malc
\mathbf{RZ}$ is defined by the pseudoidentity
     \begin{equation}
     \label{errz}
     ((ax)^\om(bx)^\om)^\om
     =((ax)^\om(bx)^\om)^\om(ax)^\om.\tag{$\dag$}
     \end{equation}
Dually,   $\mathbf{EL}\malc\mathbf{LZ}$ is
defined by the pseudoidentity
     \begin{equation}
     \label{ellz}
     ((xb)^\om(xa)^\om)^\om
     =(xa)^\om((xb)^\om(xa)^\om)^\om.\tag{$\ddag$}
     \end{equation}
(Again, both statements are immediate consequences of the above-mentioned
basis theorem for Ma\softl cev products  \cite{pinweil}.) Consequently, the two pseudoidentities
(\ref{errz},\ref{ellz}) form  a basis of
     $\mathbf{ER}\malc\mathbf{RZ}\cap\mathbf{EL}\malc\mathbf{LZ}$.
Finally, as a consequence of Corollary \ref{basis*rz} we
obtain a basis of pseudoidentities of $\mathbf{BG}*\mathbf{RZ}$ as
follows.

\begin{Cor}
\label{basisBG*RZ}
The pseudovariety $\mathbf{BG}*\mathbf{RZ}$ is defined by the pseudoidentity
     \begin{equation} x((yx)^\omega(zx)^\omega)^\omega
     =x((zx)^\omega(yx)^\omega)^\omega.\tag{$\star$}\end{equation}
\end{Cor}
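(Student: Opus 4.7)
The plan is to apply Corollary~\ref{basis*rz} with $\mathbf{V}=\mathbf{BG}$ and then to simplify the resulting basis down to a singleton. Recall from step~4 of the proof of Theorem~\ref{pcsisequalto} that $\mathbf{BG}$ is local, and that it is defined by the single pseudoidentity $(x_1^\om x_2^\om)^\om = (x_2^\om x_1^\om)^\om$. Feeding this into Corollary~\ref{basis*rz} produces, after renaming $x_1,x_2$ to $y,z$, exactly four pseudoidentities corresponding to the four choices $(y_1,y_2)\in\{y,\text{empty}\}\times\{z,\text{empty}\}$: the pseudoidentity $(\star)$ itself (both values non-empty), the tautology $x(x^\om x^\om)^\om = x(x^\om x^\om)^\om$ (both empty), and the two \emph{partial} pseudoidentities
\[
x((yx)^\om x^\om)^\om = x(x^\om(yx)^\om)^\om, \qquad x(x^\om(zx)^\om)^\om = x((zx)^\om x^\om)^\om.
\]

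The remaining task is to show that these two partial pseudoidentities are already consequences of $(\star)$ alone, so that $(\star)$ by itself defines $\mathbf{BG}*\mathbf{RZ}$. My idea is to substitute the profinite power $x^{\om-1}$ for the variable that should disappear. Setting $z:=x^{\om-1}$ in $(\star)$ forces $(zx)^\om = (x^{\om-1}x)^\om = (x^\om)^\om = x^\om$, so $(\star)$ collapses precisely to the first partial identity; the symmetric substitution $y:=x^{\om-1}$ yields the second. Such substitutions are legitimate because pseudoidentities are stable under substitution by arbitrary elements of the relatively free profinite semigroup.

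I do not anticipate any serious obstacle. The one step carrying real content is the observation that the $\om$-term $x^{\om-1}$ exactly erases the unwanted prefix inside $(zx)^\om$ (respectively $(yx)^\om$) through the collapse $(x^{\om-1}x)^\om = x^\om$. Everything else amounts to enumerating the four instances produced by Corollary~\ref{basis*rz} and verifying that the partial and trivial cases are absorbed by $(\star)$.
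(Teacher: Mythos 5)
Your proof is correct and follows essentially the same route as the paper: apply Corollary~\ref{basis*rz} to the one-element basis $(y^\om z^\om)^\om=(z^\om y^\om)^\om$ of the local pseudovariety $\mathbf{BG}$, enumerate the four resulting pseudoidentities, and absorb the degenerate instances into $(\star)$. The only (immaterial) difference is that the paper recovers the two partial identities by substituting the variable $x$ itself for the vanishing variable, via $(xx)^\om=x^\om$, whereas you substitute $x^{\om-1}$, via $(x^{\om-1}x)^\om=x^\om$; both substitutions are legitimate and yield the same conclusion.
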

\begin{proof} It has been noticed in the preliminaries that the single pseudoidentity
     $(y^\omega z^\omega)^\omega=(z^\omega y^\omega)^\omega$ forms a basis of $\mathbf{BG}$.
As already mentioned in part 4 of the proof of Theorem \ref{pcsisequalto}, the pseudovariety $\mathbf{BG}$ is local.
Thus
we may apply to
the pseudovariety $\mathbf{BG}*\mathbf{RZ}$ the rule formulated in Corollary~\ref{basis*rz}. In this way, we obtain
four pseudoidentities, {each} of the form
 $$x((ax)^\omega(bx)^\omega)^\omega=x((bx)^\omega(ax)^\omega)^\omega$$
where $a$ as well as $b$ can be a variable or the empty symbol.
If $a$, say, represents the empty symbol then $(ax)^\omega$ becomes
$x^\omega$ which is the same as $(xx)^\omega$. So, in the case
{that} $a$ and/or $b$ represent the empty symbol, the corresponding
pseudoidentity can be obtained from the one in which $a$ as well as $b$
represent variables, simply by substituting the variable $a$ and/or $b$
with the variable $x$. Therefore the pseudoidentity ($\star$) defines
$\mathbf{BG}*\mathbf{RZ}$.
\end{proof}
We arrive at the main theorem in this section.
\begin{Thm}
\label{identitybasis}
Each of the  pseudoidentity systems
$\mathrm{(i)}$ -- $\mathrm{(iii)}$ forms a basis of pseudoidentities of  $\mathbf{PCS}$:
\begin{enumerate}
\item[(i)] $((sxt)^\om(syt)^\om)^\om=((syt)^\om(sxt)^\om)^\om$,
\item[(ii)] $((ax)^\om(bx)^\om)^\om=((ax)^\om(bx)^\om)^\om(ax)^\om$
            and\\
            $((xb)^\om(xa)^\om)^\om=(xa)^\om((xb)^\om(xa)^\om)^\om$,
\item[(iii)] $x((ax)^\om(bx)^\om)^\om=x((bx)^\om(ax)^\om)^\om.$
\end{enumerate}
\end{Thm}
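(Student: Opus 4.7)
The plan is to assemble the statement directly from results already in hand, since each of the three systems has been identified in the preceding discussion as a basis for one of the pseudovarieties appearing in the chain of equalities of Theorem~\ref{pcsisequalto}. Since all of those pseudovarieties coincide with $\mathbf{PCS}$, each system will also be a basis of $\mathbf{PCS}$. No new calculation is needed; the proof is essentially a compilation.

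First I would handle (i). The paragraph preceding the theorem observes that $\mathbf{BG}\malc\mathbf{RB}$ is defined by the single pseudoidentity $((sxt)^\om(syt)^\om)^\om=((syt)^\om(sxt)^\om)^\om$. This is immediate from Corollary~\ref{agbg}, which characterises $\mathbf{BG}\malc\mathbf{RB}$ as the class of semigroups $S$ all of whose subsemigroups $aSb$ are block groups, combined with the fact that $\mathbf{BG}$ is defined by $(x^\om y^\om)^\om=(y^\om x^\om)^\om$: one simply substitutes $x\mapsto sxt$ and $y\mapsto syt$, so that the variables $s,t$ pick out the ``sandwich'' $aSb$. Since $\mathbf{BG}\malc\mathbf{RB}=\mathbf{PCS}$ by Theorem~\ref{pcsisequalto}, (i) is a basis for $\mathbf{PCS}$.

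Next I would dispose of (ii). As noted in the paragraph preceding the theorem, the pseudoidentity $(\dag)$ defines $\mathbf{ER}\malc\mathbf{RZ}$. This again follows from Lemma~\ref{malcrz} together with the basis $(y^\om z^\om)^\om=(y^\om z^\om)^\om y^\om$ of $\mathbf{ER}$, since every idempotent of the principal left ideal $L(x)$ is of the form $(ax)^\om$ for some $a$. Dually, $(\ddag)$ defines $\mathbf{EL}\malc\mathbf{LZ}$. The conjunction of $(\dag)$ and $(\ddag)$ therefore defines $\mathbf{ER}\malc\mathbf{RZ}\cap\mathbf{EL}\malc\mathbf{LZ}$, which by Theorem~\ref{pcsisequalto} equals $\mathbf{PCS}$.

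Finally (iii) is precisely the content of Corollary~\ref{basisBG*RZ}, which asserts that the pseudoidentity $(\star)$ defines $\mathbf{BG}*\mathbf{RZ}$; invoking once more the equality $\mathbf{BG}*\mathbf{RZ}=\mathbf{PCS}$ from Theorem~\ref{pcsisequalto} completes the argument. There is no real obstacle here: the substantial work was already performed in Theorem~\ref{pcsisequalto} (semantic identification of $\mathbf{PCS}$) and in Corollaries~\ref{agbg} and~\ref{basisBG*RZ} together with the Pin--Weil basis theorem cited in the preceding discussion; the present theorem is merely the syntactic reading of those results.
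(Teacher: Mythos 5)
Your proposal is correct and matches the paper's own (implicit) proof: Theorem~\ref{identitybasis} is obtained exactly by combining the pseudoidentity descriptions of $\mathbf{BG}\malc\mathbf{RB}$, of $\mathbf{ER}\malc\mathbf{RZ}\cap\mathbf{EL}\malc\mathbf{LZ}$, and of $\mathbf{BG}*\mathbf{RZ}$ established in the paragraph preceding the statement and in Corollary~\ref{basisBG*RZ} with the chain of equalities of Theorem~\ref{pcsisequalto}. The purely syntactic derivation of the mutual equivalence of (i)--(iii) that follows the theorem in the paper is presented only as an independent, supplementary argument, so your omission of it is not a gap.
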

\noindent It is interesting to see directly, that is, using only syntactic
arguments, that the  systems of pseudoidentities (i)--(iii) are equivalent. In the following, we shall present such a syntactic derivation.

In order to prove that (i) implies (ii) we first note
that by substituting $ts$ for $x$ in (i) and  {using}
$(stst)^\om=(st)^\om$ we get
     \begin{equation}
     \label{deletex}
     ((st)^\om(syt)^\om)^\om
     =((syt)^\om(st)^\om)^\om.\tag{{$\mathrm{i'}$}}
     \end{equation}
Now, using the equalities $y^\om(y^\om z^\om)^\om=(y^\om z^\om)^\om=(y^\om z^\om)^\om z^\om$
we obtain
     \begin{align*}
     ((ax)^\om(bx)^\om)^\om&=(((ax)^\om(bx)^\om)^\om)^\om&\\
     &=((ax)^\om((ax)^\om(bx)^\om)^\om)^\om&\\
     &=((ax)^\om(amx)^\om)^\om&
     \mbox{\ for\ }m=x(ax)^{\om-1}(bx)^{\om-1}b\\
     &=((amx)^\om(ax)^\om)^\om&\mbox{\ by\ (\ref{deletex})}\\
     &=(((ax)^\om(bx)^\om)^\om(ax)^\om)^\om&
               \end{align*}
and
     \begin{align*}
    ((ax)^\om(bx)^\om)^\om&=(((ax)^\om(bx)^\om)^\om(ax)^\om)^\om& (\natural)\phantom{.}\\
         &=(((ax)^\om(bx)^\om)^\om(ax)^\om)^\om(ax)^\om&\\
         &=((ax)^\om(bx)^\om)^\om(ax)^\om&\mbox{ by }(\natural).
     \end{align*}
\noindent The second pseudoidentity of (ii) is derived analogously
(by reading (\ref{deletex}) from right to left,  that is, the two sides of that pseudoidentity are swapped with each other).

Let us consider next the pseudoidentites (ii) and denote them {by}
(ii1) and (ii2), respectively. {In order to verify that (ii)
implies (iii), we} multiply (ii1) by~$x$ on the left to obtain:
     \begin{align*}
     x{\cdot}((ax)^\om(bx)^\om)^\om&
     =x{\cdot}((ax)^\om(bx)^\om)^\om(ax)^\om&\mbox{\ by (ii1)}\\
     &=(xa)^\om((xb)^\om(xa)^\om)^\om{\cdot}x&
     \mbox{\ shifting brackets\ }\\
     &=((xb)^\om(xa)^\om)^\om{\cdot}x&\mbox{by (ii2)}\\
     &=x{\cdot}((bx)^\om(ax)^\om)^\om&\mbox{\ shifting brackets.}&
\end{align*}

Finally, we intend to derive (i) from (iii). For this purpose,
we note that shifting the brackets in (iii) we get an equivalent
pseudoidentity, namely:
     \begin{equation}
     \label{shiftiii}
     ((xa)^\om(xb)^\om)^\om x=((xb)^\om(xa)^\om)^\om x.\tag{iii$^s$}
     \end{equation}
Having this in mind and assuming (iii), we obtain
     \begin{align*}
     ((sxt)^\om(syt)^\om)^\om&
     =((sxt)^\om(syt)^\om)^\om((sxt)^\om(syt)^\om)^\om&\\
     &=l\underline{t}((sx\underline{t})^\om(sy\underline{t})^\om)^\om&\\
     &=l\underline{t}((sy\underline{t})^\om(sx\underline{t})^\om)^\om&
     \mbox{\ by (iii)}\\
     &=((sxt)^\om(syt)^\om)^\om((syt)^\om(sxt)^\om)^\om&\\
     &=((\underline{s}xt)^\om(\underline{s}yt)^\om)^\om\underline{s}r&\\
     &=((\underline{s}yt)^\om(\underline{s}xt)^\om)^\om \underline{s}r&
     \mbox{\ by\ (\ref{shiftiii})}\\
     &=((syt)^\om(sxt)^\om)^\om((syt)^\om(sxt)^\om)^\om&\\
     &=((syt)^\om(sxt)^\om)^\om,&
\end{align*}
where
     \[l=((sxt)^\om(syt)^\om)^{\om-1}(sxt)^\om(syt)^{\om-1}sy\]
      and
      \[r=yt(syt)^{\om-1}(sxt)^\om((syt)^\om(sxt)^\om)^{\om-1},\]
which proves that (iii) implies (i). Altogether we have given an alternative,
purely syntactic proof of the  {equalities}
     \[\mathbf{BG}\malc\mathbf{RB}
     =\mathbf{ER}\malc\mathbf{RZ}\cap\mathbf{EL}\malc\mathbf{LZ}
     =\mathbf{BG}*\mathbf{RZ}.\]
{What is more, we actually have proved the implications:
     \[\mathrm{(i)}\Rightarrow\mathrm{(i')}\Rightarrow\mathrm{(ii)}
     \Rightarrow\mathrm{(iii)}\Rightarrow\mathrm{(i)}.\]
{}From this it follows that $\mathrm{(i')}$ is yet another pseudoidentity
defining $\mathbf{PCS}$. Moreover, each of the {pseudo}identity bases of Theorem \ref{identitybasis}
provides a new algorithm for deciding membership in $\mathbf{PCS}$.

\subsection{Another algorithmic problem}\label{constructivedivision} For two completely simple semigroups $B$ and $C$, the direct product $\mathfrak{P}(B)\times \mathfrak{P}(C)$ embeds in $\mathfrak{P}(B\times C)$. Hence the pseudovariety $\mathbf{PCS}$ consists of all divisors of the power semigroups $\mathfrak{P}(C)$ of all completely simple semigroups $C$. For a given semigroup $S\in \mathbf{PCS}$ (that is, a semigroup $S$ that has successfully passed a membership test for $\mathbf{PCS}$),  one may ask how to construct a  completely simple semigroup $C$ for which $S$ divides $\mathfrak{P}(C)$. The analogous question for  the members of $\mathbf{PG}$ had been a standing problem and was solved by Steinberg and the author \cite{auinsteinbdivisions}. Based on that  we shall construct a solution for the present case.

So, let $S\in\mathbf{PCS}$, let $RZ(S)$ be the (free) right zero semigroup on $S$ and let $\phi:S\to RZ(S)$ be the canonical relational morphism  as in Lemma \ref{malcrz}. The derived semigroupoid $D_\phi$ of $\phi$ has set of objects $RZ(S)\cup\{1\}=S\cup\{1\}$ where $1\notin S$. The set $D_\phi(a,1)$ of all morphisms $a\to 1$ is empty for each object $a$; for $b\in S$, the set $D_\phi(1,b)$ can be identified with the set of all triples $(1,\gamma,b)$ where $\gamma$ is a `translation' $\{1\}\to L(b)$, $1\mapsto 1g=g$ with $g\in L(b)$; finally,
for two objects $a,b\in S$, the set $D_\phi(a,b)$  can be identified with the set of all triples $(a,\sigma,b)$ where $\sigma$ denotes the mapping $ L(a)\to L(b)$, $x\mapsto xs$ for $s\in L(b)$. The composition of two composable morphisms  is the obvious one:
\begin{equation}\label{product}(a,\sigma,b)(b,\tau,c)=(a,\sigma\tau,c).\tag{$*$}\end{equation}
Let $BG(S)$ be the \emph{consolidated semigroup} of $D_\phi$ as in \cite{tilson, qtheory} (see  also below). Since $S\in \mathbf{BG}*\mathbf{RZ}$, the derived semigroupoid theorem implies that (i) the local semigroups of $D_\phi$ are block groups (and so $BG(S)$ itself is a block group since $BG(S)$ does not contain non-trivial left zero and right zero subsemigroups) and (ii) $S$ divides the wreath product $BG(S)\wr RZ(S)$.

Next, from Theorem 4.5 in \cite{auinsteinbdivisions} we are aware how to construct from $BG(S)$ a group $G(S)$ such that $BG(S)$ divides the power group $\mathfrak{P}(G(S))$. Standard properties of the wreath product then imply that $BG(S)\wr RZ(S)$ and hence also $S$ divide the wreath product $\mathfrak{P}(G(S))\wr RZ(S)$. Finally, in Lemmas 2.1 and 2.2 in \cite{auinsteinbdivisions} a division from $\mathfrak{P}(G(S))\wr RZ(S)$ to  $\mathfrak{P}(G(S)\wr RZ(S))$ is explicitly constructed. Consequently, the original semigroup $S$ divides the power semigroup $\mathfrak{P}(G(S)\wr RZ(S))$ and $G(S)\wr RZ(S)$ is completely simple.

We summarize the procedure as follows: for $S\in \mathbf{PCS}$ let $BG(S)$ be the set consisting of an element $0$ together with all triples $(a,\sigma,b)$ for $a\in S\cup\{1\}$, $b\in S$ and $\sigma$ being a mapping $\tilde L(a)\to L(b)$ of the form $x\mapsto xs$ for some $s\in L(b)$ (here $\tilde L(1)=\{1\}$ while $\tilde L(a)=L(a)$ if $a\ne 1$). Let $BG(S)$ be endowed with the multiplication (\ref{product}) whenever possible and let all other products be defined to be $0$. Then $BG(S)$ is a block group; let $G(S)$ be a group  for which $BG(S)$ divides the power group $\mathfrak{P}(G(S))$, as constructed in \cite{auinsteinbdivisions}. Then $S$ divides the power semigroup $\mathfrak{P}(G(S)\wr RZ(S))$ where $RZ(S)$ is the set $S$ endowed with right zero multiplication, and $G(S)\wr RZ(S)$ is completely simple.
\vskip0.05cm
\noindent
\textbf{Personal remark of the author}. The work on this paper started as a joint project with J.~Ka\softd ourek (initiated by the author). Unfortunately, from the very beginning, this cooperation was overshadowed by dissent on various matters of the joint work. After some while the author felt that there was no longer the possibility to write the paper in a way that would be satisfactory for both authors and  resigned from that cooperation. The outcome is now entirely according to the view of the author, and Ka\softd ourek did not accept co-authorship for it.

\end{document}